\numberwithin{equation}{section}
\newtheorem{Theorem}{Theorem}[section]
\newtheorem{Proposition}[Theorem]{Proposition}
 { \theoremstyle{definition}
\newtheorem{Remark}[Theorem]{Remark} }
\begin{document}
\allowdisplaybreaks

\newcommand{\arXivNumber}{2108.02603}

\renewcommand{\thefootnote}{}

\renewcommand{\PaperNumber}{090}

\FirstPageHeading

\ShortArticleName{Spinors, Twistors and Classical Geometry}

\ArticleName{Spinors, Twistors and Classical Geometry\footnote{This paper is a~contribution to the Special Issue on Twistors from Geometry to Physics in honor of Roger Penrose. The~full collection is available at \href{https://www.emis.de/journals/SIGMA/Penrose.html}{https://www.emis.de/journals/SIGMA/Penrose.html}}}

\Author{Nigel J.~HITCHIN}

\AuthorNameForHeading{N.J.~Hitchin}

\Address{Mathematical Institute, Woodstock Road, Oxford, OX2 6GG, UK}
\Email{\href{mailto:hitchin@maths.ox.ac.uk}{hitchin@maths.ox.ac.uk}}
\URLaddress{\url{https://people.maths.ox.ac.uk/hitchin/}}

\ArticleDates{Received August 07, 2021, in final form October 07, 2021; Published online October 10, 2021}

\Abstract{The paper studies explicitly the Hitchin system restricted to the Higgs fields on a fixed very stable rank 2 bundle in genus 2 and 3. The associated families of quadrics relate to both the geometry of Penrose's twistor spaces and several classical results.}

\Keywords{spinor; twistor; quadric; stable bundle}

\Classification{14H60; 32L25}

\begin{flushright}
\begin{minipage}{60mm}
\it Dedicated to Roger Penrose\\ on the occasion of his 90th birthday
\end{minipage}
\end{flushright}

\renewcommand{\thefootnote}{\arabic{footnote}}
\setcounter{footnote}{0}

\section{Introduction}

In 1974 I returned to Oxford after three postdoctoral years in the USA and found there a~distinctly new style of mathematics emanating from Roger Penrose's group. From seminars and talking to students and postdocs I gradually learned what twistor theory was all about and it subsequently became a central idea
in much of my work, in four and fewer dimensions and higher, especially in the context of hyperk\"ahler geometry. In fact my papers listed in the bibliography \cite{Hitchin1979,Hitchin1981,Hitchin1982,Hitchin1982b,Hitchin1984,Hitchin1995,Hitchin1998,Hitchin2013, Hitchin2014, Hitchin2021,Hitchin/Karlhede/Lindstrom/Rocek} are all about the twistor interpretation of various constructions in differential geometry.

This article is based on a recent encounter with spinors and twistors in trying to understand in more detail the integrable system associated to moduli spaces of Higgs bundles, part of a~project with Tam\'as Hausel, and by looking at curves of genus $2$ and $3$ we come face to face with some classical results in projective geometry. A common theme seems to be families of quadrics, in particular the Klein quadric which is Penrose's compactified, complexified Minkowski space, and looking at these issues through twistor-coloured glasses reveals some new approaches to these moduli spaces.

To be more specific, we observe a relationship between the intersection of two quadrics in ${\rm P}^5$ viewed as both a twistor space and a moduli space of stable bundles on a curve of genus $2$, and use the formalism of self-duality in four dimensions to describe the integrable system for a curve of genus $3$.
The classical contributors are Pl\"ucker, Salmon, Cayley, Hesse \dots\ but the modern reader is referred to~\cite{Dolgachev} for an excellent account of many of these themes.

\section{Spinors and twistors}
\subsection{Four dimensions}\label{four}
The first thing one learns in twistor theory is how to deal with four dimensions using two-component spinors. If $S_+$, $S_-$ are $2$-dimensional complex vector spaces with skew forms~$\epsilon_+$,~$\epsilon_-$ then the tensor product $V=S_+\otimes S_-$ has an induced non-degenerate symmetric bilinear form. In particular a null vector $v$ can be written $v=\phi\otimes \psi$.

The exterior product now has a decomposition
$\Lambda^2V=S_+^2\oplus S_-^2$ as the direct sum of two $3$-dimensional spaces, the second symmetric power of $S_+$ or $S_-$. If $V$ is the cotangent space~$T^*$ of a manifold these are the self-dual and anti-self-dual $2$-forms. The spaces $S^2_+$, $S^2_-$ are also naturally the $3$-dimensional Lie algebras of trace zero endomorphisms of $S_+$, $S_-$.

The curvature tensor of a metric, as a section of $\Lambda^2T^*\otimes \Lambda^2T^*$, decomposes into the scalar curvature, the trace-free Ricci tensor which lies in $S^2_+\otimes S^2_-$ and the self-dual and anti-self-dual components of the Weyl tensor which lie in $S^4_+$, $S^4_-$ respectively.

\subsection{Linear twistor theory}
The twistor interpretation of complexified Minkowski space begins with ${\rm P}^3$, complex projective $3$-space. The {\it lines} in ${\rm P}^3$ are parametrized by a $4$-dimensional quadric $Q^4\subset{\rm P}^5$ and the conformal geometry on $Q^4$ is defined by asserting that two points are null separated if the corresponding lines in ${\rm P}^3$ intersect.

A {\it point} in the twistor space ${\rm P}^3$ defines the family of lines passing through it which is a null plane in $Q^4$, an $\alpha$-plane. A {\it plane} in ${\rm P}^3$ (equivalently a point in the dual projective space) defines the lines within it which consists also of a null plane but belongs to a different family, the $\beta$-planes.

A line in the {\it quadric} is a {\it null geodesic} and this is given by a pair of a point in ${\rm P}^3$ lying in a plane -- the line is the intersection of the corresponding $\alpha$-plane and $\beta$-plane. Thus the basic constituents of the conformal structure are encoded in purely geometrical terms within a~three-dimensional space.

Underlying this correspondence is the special isomorphism ${\rm Spin}(6,{\mathbb C})\cong {\rm SL}(4,{\mathbb C})$. The first group consists of the conformal transformations of the quadric, and the twistor space and its dual are the projective spaces of the two spin representations $V_+$, $V_-$.

\subsection{Nonlinear twistor theory} \label{graviton}
The nonlinear graviton construction \cite{Penrose} replaces ${\rm P}^3$ by a more general complex $3$-manifold $Z$, which contains a family of rational curves -- the twistor lines~-- copies of ${\rm P}^1$ with normal bundle $N\cong \mathcal{O}(1)\oplus \mathcal{O}(1)$ where $\mathcal{O}(n)$ denotes the line bundle of degree $n$ on ${\rm P}^1$. Kodaira's deformation theorem implies that there is a complete four-dimensional family $M^4$ of such lines and null separation is defined as in the linear case by intersection of the rational curves.

The Weyl tensor is conformally invariant and in this case the component in $S^4_-$ vanishes. The lines through a point in $Z$ define null surfaces like the $\alpha$-planes but the non-vanishing of the self-dual component of the Weyl tensor obstructs the existence of $\beta$-planes.

\section{Integrable systems}
These are the so-called Hitchin systems introduced in~\cite{Hitchin1987}. The setting is as follows in the simplest case: we have a compact Riemann surface, or algebraic curve $C$, of genus $g\ge 2$ and a~stable rank $2$ vector bundle $E$ over $C$. The moduli space ${\mathcal N}$ of such bundles is a projective variety of dimension $3g-3$ and the cotangent space at a point $E$ is $H^0(C,\operatorname{End}_0E\otimes K)$ the space of holomorphic trace zero sections $\Phi$ of the bundle of endomorphisms twisted by $K$, the canonical bundle of holomorphic $1$-forms on~$C$. There is a natural map $\operatorname{tr} \Phi^2$ from $H^0(C,\operatorname{End} _0E\otimes K)$ to~$H^0\big(C,K^2\big)$. Both spaces have dimension $3g-3$ and so we have a $2n$-dimensional mani\-fold~$T^*{\mathcal N}$ with $n$ holomorphic functions, quadratic in the fibres. These Poisson-commute and define a completely integrable system. They are quadratic and so on open sets they describe geodesic flows which are completely integrable.

The simplest example is $g=2$ with $\Lambda^2E$ the trivial bundle, in which case ${\mathcal N}$ is ${\rm P}^3$. The integrable system was calculated in~\cite{Geemen/Previato}, with a beautiful formula in~\cite{Gawedzki/Bich} and a less attractive but more concrete expression in~\cite{Loray/Heu}. Nevertheless, if you take a specific~$E$ and ask what the quadratic map from ${\mathbb C}^3$ to ${\mathbb C}^3$ looks like, it is difficult to realize.

As far as I am aware there is no discussion in the literature of an explicit form for $g=2$ and bundles of {\it odd} degree (and $\Lambda^2E$ fixed) where, thanks to \cite{Narasimhan/Ramanan1969,Newstead} we know that the moduli space~${\mathcal N}$ is the intersection of two quadrics in~${\rm P}^5$.

 We can describe this situation as three quadratic forms on ${\mathbb C}^3$, or in classical geometrical language a {\it net of conics}. We begin next, given our familiarity with the quadrics in twistor theory, to look at this intersection, its relationship with stable bundles, and the associated net.

\section{The intersection of two quadrics}
\subsection{Projective bundles}
Complexified compactified Minkowski space is a quadric $Q_1$ in ${\rm P}^5$. We are now given another generic quadric $Q_2$. This means the zero sets of two quadratic forms $q_1$, $q_2$ on ${\mathbb C}^6$ and indeed {\it a~pencil} of quadrics $z_1q_1+z_2q_2$ for $[z_1,z_2]$ homogeneous coordinates of a point $z\in {\rm P}^1$. The singular quadrics in the pencil occur when $\det (z_1q_1+z_2q_2)=0$ which gives six points \mbox{$a_1,\dots, a_6\in {\rm P}^1$}. A point $x\in Q_1\cap Q_2$ lies in all quadrics $Q_z$ of the pencil.

Each point $z\ne a_i$ in ${\rm P}^1$ defines a nonsingular quadric $Q_z$ and we can take the corresponding space of $\alpha$-planes, a projective space $P_z$. This is not in fact well-defined because there is no absolute way to distinguish $\alpha$- and $\beta$-planes, but if we take a double covering of ${\rm P}^1$ branched over the six points $a_i$ there is a well-defined choice. The curve $C$ so constructed is of genus $2$ and has an involution $\sigma$ exchanging the sheets of the covering.

When $z=a_i$ the quadric is singular but the planes in it (just one family now) are also parametrized by ${\rm P}^3$. One way to think of this is via the group ${\rm Spin}(6,{\mathbb C})$ with its two $4$-dimensional spin representations $V_+$ and $V_-$ -- they become isomorphic as representations of ${\rm Spin}(5,{\mathbb C})$. The result is a 4-dimensional manifold $M^4$ which is a ${\rm P}^3$-fibration over the curve $C$.

A point $x\in Q_1\cap Q_2$ now defines a twistor line in each fibre $P_z$ of $M^4$ and hence a ${\rm P}^1$ bundle over~$C$ -- and this is the projectivized bundle ${\rm P}(E)$ in the description of the moduli space in~\cite{Narasimhan/Ramanan1969, Newstead}! Of course, the main work in those papers is proving that every stable bundle is obtained this way.

Fixing $\Lambda^2E$ means that $E$ and $F$ define the same projective bundle if $F=E\otimes L$ and $L^2$ is trivial. Then the moduli space of projective bundles is ${\mathcal N}/\Gamma$ where $\Gamma = H^1(C,{\mathbb Z}_2)\cong {\mathbb Z}_2^4$, and since tensoring with $L$ has no effect on $\operatorname{End}_0E$, this moduli space is all we need.

Twistors give us an insight into another construction of the moduli space which will be useful in the next section. (This is Atiyah's Smith's Prize essay \cite{Atiyah} when he was a graduate student in Cambridge at the same time as Penrose.) It is known classically that through a general point $x\in Q_1\cap Q_2$ there pass four lines. Choose a pair $x\in \ell\subset Q_1\cap Q_2$, then it gives a line in each quadric of the pencil and hence a point in a plane in each ${\rm P}^3$ fibre~$P_z$. The point defines a~section over $C$ and since $x\in \ell$ the section lies in the ruled surface ${\rm P}(E)$. Equivalently we have a sub-line bundle of~$E$. We can normalize the choice of representative vector bundle by asking that the subbundle is trivial and then $E$ is an extension $\mathcal{O}\rightarrow E\rightarrow L$ where~$L$ has degree~$1$.

The extension class lies in the sheaf cohomology space $H^1(C, L^*)$ which is dual to $H^0(C, LK)$ and by Riemann--Roch these are two-dimensional. The projective space of a 2-dimensional vector space $V$ is canonically isomorphic to its dual (to each subspace of $V$ associate its annihilator in~$V^*$), so the extension class $[\alpha]$ is determined by a section $s$ of $LK$, which is of degree $3$. Then~$s$ vanishes on a divisor $p+q+r$, and its annihilator satisfies $[\alpha] s=0$ in the duality pairing $H^1(C,L^*)\otimes H^0(C,LK)\rightarrow H^1(C,K)={\mathbb C}$.

What Atiyah shows is that the four choices of line define four unordered triples of points in~$C$:
$(p,q,r)$, $(p,\sigma(q),\sigma(r))$, $(\sigma(p),q,\sigma(r))$, $(\sigma(p),\sigma(q),r)$. Then, taking the images of $p$, $q$, $r$ under the projection $\pi\colon C\rightarrow {\rm P}^1$, a model for $Q_1\cap Q_2/\Gamma$ is a double covering of the symmetric product $S^3\big({\rm P}^1\big)\cong {\rm P}^3$ branched over six planes $(a_i,\pi(q),\pi(r))$, $1\le i\le 6$.

\subsection{The net of conics}\label{net}
We shall use Atiyah's approach to calculate $\operatorname{tr}\Phi^2\colon H^0(C,\operatorname{End}_0E\otimes K) \rightarrow H^0\big(C,K^2\big)$ in this genus $2$ case. Via the projection $\pi\colon C\rightarrow {\rm P}^1$ the canonical bundle is isomorphic to $\pi^*\mathcal{O}(1)$. Each section of $K$ therefore has a divisor of the form $p+\sigma(p)$. The $3$-dimensional space of sections of $K^2\cong \pi^*\mathcal{O}(2)$ is given by pulling back $H^0\big({\rm P}^1,\mathcal{O}(2)\big)$.

To work with the extension we adopt a Dolbeault approach and take a $C^{\infty}$ splitting of $E$. Then
 the holomorphic structure is defined by the new $\bar\partial$-operator $(s_1,s_2)\mapsto \big(\bar\partial s_1+\alpha s_2, \bar\partial s_2\big)$
for $\alpha\in \Omega^{0,1}(C,L^*)$ representing a class $[\alpha]$ in $H^1(C,L^*)$. As we have seen this is determined by a~divisor $D=p+q+r$ of a~section~$s$ such that~$[\alpha]$ annihilates~$s$, or equivalently $\alpha s=\bar\partial t$.

A section
\[\Phi=\begin{pmatrix}a & b\\
c & -a\end{pmatrix}\] of $\operatorname{End}_0E\otimes K$ is now holomorphic if
\begin{equation}
\bar\partial c=0,\qquad \bar\partial a+\alpha c=0,\qquad \bar\partial b - 2\alpha a=0.
\label{holo}
\end{equation}

Multiplication by $s$, a section of $KL$, gives an exact sequence of sheaves
\begin{displaymath}0\rightarrow \mathcal{O}_C(L^*)\stackrel{s}\rightarrow \mathcal{O}_C(K)\rightarrow \mathcal{O}_D(K)\rightarrow 0\end{displaymath}
and in the exact cohomology sequence to say that $[\alpha]$ annihilates $s$ means it lies in the image of $H^0(D, K)\rightarrow H^1(C,L^*)$. This implies that
 $\alpha=\bar\partial u/s$ where $u$ is a $C^{\infty}$ section of $K$ holomorphic in a neighbourhood of $D$ and vanishing outside a slightly larger one. Here $u$ is well-defined modulo the restriction of a section of $K$ to $D$.

A bundle $E$ is called {\it very stable} if there is no nilpotent section of $\operatorname{End} E\otimes K$.

\begin{Proposition} Let $E$ be a very stable rank $2$ bundle of odd degree on a curve~$C$ of genus~$2$. Then the net of conics defined by $\operatorname{tr} \Phi^2$ is spanned by quadratic forms of the form $x^2$, $y^2$, $z^2$, geometrically three double lines.
\end{Proposition}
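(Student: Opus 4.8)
The plan is to compute $\operatorname{tr}\Phi^2$ in the Dolbeault frame set up above and to extract the three double lines by restricting to the divisor $D=p+q+r$. For $\Phi=\begin{pmatrix}a&b\\ c&-a\end{pmatrix}$ one has $\operatorname{tr}\Phi^2=2\big(a^2+bc\big)$, and since $\det\Phi=-\tfrac12\operatorname{tr}\Phi^2$ a trace-free $\Phi$ is nilpotent precisely when $\operatorname{tr}\Phi^2=0$. As noted above, $H^0\big(C,K^2\big)=\pi^*H^0\big({\rm P}^1,\mathcal{O}(2)\big)$, so $\operatorname{tr}\Phi^2=\pi^*F_\Phi$ for a binary quadratic $F_\Phi$ on ${\rm P}^1$, and I shall read off the net from the values of $F_\Phi$ at three points.

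First I fix a basis of $V=H^0(C,\operatorname{End}_0E\otimes K)$. Solving \eqref{holo} as in the discussion above, the two independent solutions with $c=0$ have $a\in H^0(C,K)$, while the last direction has $c=s$ and, using $\alpha s=\bar\partial u$, may be normalised to $a=-u$. Writing $\Phi=x\Phi_1+y\Phi_2+z\Phi_3$ gives $a=xa_1+ya_2-zu$ and $c=zs$, with $a_1,a_2$ a basis of $H^0(C,K)$. The key point is that at a zero $P$ of $s$ the off-diagonal term disappears: $c(P)=zs(P)=0$, so $\tfrac12\operatorname{tr}\Phi^2(P)=a(P)^2=\ell_P(\Phi)^2$ is a perfect square in the linear form $\ell_P=a_1(P)x+a_2(P)y-u(P)z$. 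Thus evaluating $F_\Phi$ at each of $\pi(p),\pi(q),\pi(r)$ yields a double line $\ell_p^2,\ell_q^2,\ell_r^2$.

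When these three images are distinct---the generic case, as $D$ is a general effective divisor of degree $3$---the three evaluations form a basis of $H^0\big({\rm P}^1,\mathcal{O}(2)\big)^*$, a binary quadratic being determined by its values at three points. Lagrange interpolation then gives $F_\Phi(t)=\sum_i\ell_{P_i}(\Phi)^2L_i(t)$, so in the basis $\{L_i\}$ of the target the Hitchin map is $(\ell_p,\ell_q,\ell_r)\mapsto\big(\ell_p^2,\ell_q^2,\ell_r^2\big)$ and the net is spanned by the double lines $\ell_p^2,\ell_q^2,\ell_r^2$---provided $\ell_p,\ell_q,\ell_r$ are linearly independent, so that $X=\ell_p$, $Y=\ell_q$, $Z=\ell_r$ are genuine coordinates and the net becomes $\langle X^2,Y^2,Z^2\rangle$.

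This independence is exactly where very stability enters, and it is the step I expect to be the real obstacle. Were $\ell_p,\ell_q,\ell_r$ dependent, there would be a nonzero $\Phi$ with $a(p)=a(q)=a(r)=0$; then $\operatorname{tr}\Phi^2=\pi^*F_\Phi$ would vanish at $p,q,r$, forcing the quadratic $F_\Phi$ to vanish at the three distinct points $\pi(p),\pi(q),\pi(r)$ and hence to vanish identically. So $\operatorname{tr}\Phi^2=0$, making $\Phi$ nilpotent and contradicting very stability. Hence the coefficient matrix of $\ell_p,\ell_q,\ell_r$ is invertible and the net is spanned by $x^2,y^2,z^2$. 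The technical points needing care are the trivialisations of $K^2_P$ implicit in writing $\ell_P^2$ and the genericity requirement that $\pi(p),\pi(q),\pi(r)$ be distinct.
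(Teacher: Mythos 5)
Your core computation is a genuinely different, and in fact slicker, route than the paper's. The paper solves (\ref{holo}) explicitly for the off-diagonal entries $b_i$ (which requires normalising $u$ to vanish at $q$, $r$ and tracking the constants $\lambda_i$), writes out $-\operatorname{tr}\Phi^2$ in coordinates, and only at the end diagonalises by completing a square. You avoid computing $b$ altogether: since $c=zs$ vanishes on $D=p+q+r$, the evaluation of $\operatorname{tr}\Phi^2=2\big(a^2+bc\big)$ at each point of $D$ is automatically the perfect square $\ell_P(\Phi)^2$, and interpolation at $\pi(p)$, $\pi(q)$, $\pi(r)$ reassembles the binary quadratic. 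This explains conceptually why the net diagonalises: the evaluation functionals at the three points are the diagonalising basis of $H^0\big(C,K^2\big)^*$, and the paper's completion of the square is precisely the change to that basis (its completed linear form is proportional to your $\ell_p$). Your use of very stability to force invertibility of the matrix of $(\ell_p,\ell_q,\ell_r)$ --- a common zero would give a nonzero $\Phi$ with $\operatorname{tr}\Phi^2\equiv 0$, hence nilpotent --- is also correct as far as it goes.

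However, there is a genuine gap, and it is not the mere technicality you label it as: the distinctness of $\pi(p)$, $\pi(q)$, $\pi(r)$ cannot be assumed as ``genericity'', because the proposition is asserted for \emph{every} very stable $E$; distinctness must itself be deduced from very stability. It is essential, not cosmetic: both your interpolation step and your vanishing argument (a binary quadratic vanishing at the three points vanishes identically) collapse without it, and on the coincidence locus the conclusion is actually false --- the paper's subsequent Remark notes that the net there degenerates to $x^2$, $y^2$, $yz$. (Note also that the very existence of your basis already uses very stability: solving (\ref{holo}) for $b$ needs $H^1(C,L^*K)=0$, equivalently $H^0(C,L^*K)=0$, i.e., no upper-triangular nilpotent.) The deduction can be supplied, and the paper does most of it: $K(-q-r)\cong L^*(p)$, so a section would force $L\cong\mathcal{O}_C(p)$ and hence a section of $L^*K\cong K(-p)$, a contradiction; thus $r\ne\sigma(q)$, and by relabelling $p$, $q$, $r$ no two of them are $\sigma$-conjugate. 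The remaining coincidence $q=r$ must be excluded by a separate argument, for instance: then $s$ vanishes doubly at $q$, so the inclusion $L(-q)\to L$ lifts to a line subbundle of $E$ (the extension class annihilates $H^0(C,LK(-q))=\langle s\rangle$), and $\operatorname{Hom}\big(E/L(-q),L(-q)\otimes K\big)\cong H^0(C,LK(-2q))=H^0\big(C,\mathcal{O}_C(p)\big)\ne 0$ produces a nilpotent Higgs field, contradicting very stability. With these additions your argument becomes a complete and attractive alternative proof.
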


\begin{proof}
We use the description of the bundle as an extension. If the degree one line bundle $L^*K$ has a section then there is a clear nilpotent $\Phi$, taking $a=0=c$ in~(\ref{holo}), so under the assumption that $E$ is very stable, $H^0(C,L^*K)=0$. Now $K(-q-r)\cong L^*(p)$ and if this has a section then $L\cong \mathcal{O}_C(p)$, but then a holomorphic form vanishing at $p$ gives a section of $L^*K\cong K(-p)$. It follows that $K(-q-r)$ has no sections and by adding one-forms that are non-zero at $q$ and $r$ we can choose $u$ to vanish at~$q$,~$r$.

First consider the case $c=0$ then $a$ in (\ref{holo}) is a holomorphic $1$-form $h$. By Riemann--Roch $H^0(C,L^*K)=0$ implies $H^1(C,L^*K)$ =0, therefore $[\alpha] a\in H^1(C,L^*K)$ vanishes which means we can solve~(\ref{holo}) uniquely for $b$. This gives two sections $\Phi_1$, $\Phi_2$ of $\operatorname{End}_0E\otimes K$ taking $h=h_1,h_2$ spanning~$H^0(C,K)$.

To determine $b$ explicitly, the equation $\bar\partial b - 2\alpha a=0$ gives $sb-2uh_i$ as a holomorphic section of $K^2$ which, by the choice of~$u$, vanishes at $q$ and $r$. Take $h_1$ to vanish at $q$ and $h_2$ to vanish at $r$ then this is a multiple $\lambda_1$ of $h_1h_2$. So $b_i=(2uh_i+\lambda_ih_1h_2)/s$ and regularity of $b$ at $p$ gives $\lambda_1h_2(p)+2u(p)=0, \lambda_2h_1(p)+2u(p)=0$.

To find another independent section
take $c$, a holomorphic section of $LK$, to be $s$ itself.
Since $\alpha=\bar\partial u/s$ we can now take $a= -u$ to solve $\bar\partial a+\alpha c=0$.
We require
$\bar\partial b=2(-u)\bar\partial u/s=\bar\partial\big({-}u^2\big)/s$.
Now $sb+u^2$, a holomorphic section of~$K^2$, vanishes at $q$, $r$ so $sb =-u^2+\lambda_3h_1h_2$. Here we have $\lambda_3h_1h_2(p)=u^2(p)$ and hence, since $\lambda_1h_2(p)+2u(p)=0$ and $\lambda_2h_1(p)+2u(p)=0$,
it follows that $ \lambda_1\lambda_2=4u^2/h_1h_2(p)=4\lambda_3$
and we have $\Phi_3$.

Since $K=\pi^*\mathcal{O}(1)$, holomorphic 1-forms are pulled back from linear forms $az+b$ so take $h_1=1$, $h_2=z$. Then
for $\Phi=x_1\Phi_1+x_2\Phi_2+x_3\Phi_3$ we get
\[
-\operatorname{tr}\Phi^2=(x_1+zx_2)^2+x_3z( \lambda_1x_1+\lambda_2x_2 +\lambda_1\lambda_2x_3/4).
\] Taking coefficients of $1$, $z$, $z^2$, the $3$-dimensional space of conics is now spanned by
$x_1^2$, $x_2^2$ and $2x_1x_2+x_3(\lambda_1x_1+\lambda_2x_2+\lambda_1\lambda_2x_3)/4$.
Completing the square in the last term gives
\[
\frac{\lambda_1\lambda_2}{4}\left(x_3+\frac{2}{\lambda_1\lambda_2}(\lambda_1x_1+\lambda_2x_2)\right)^2-\frac{\lambda_1}{\lambda_2}x_1^2-\frac{\lambda_2}{\lambda_1} x_2^2,
\]
so there is a basis in which the net is the three-parameter family of quadratic forms spanned by~$x^2$, $y^2$, $z^2$.
\end{proof}

\begin{Remark} What is notable here is that the structure is independent of the curve $C$ and also of the bundle so long as it is very stable. The above method
can be used for other types of stable bundle $E$, for example a generic situation where there is a nilpotent $\Phi$ gives the net spanned by~$x^2$,~$y^2$,~$yz$. In Atiyah's description the bundle ${\rm P}(E)$ for this case is defined by a~generic point on the diagonal $(a,b,b)$ in $S^3{\rm P}^1$.
\end{Remark}

Before considering genus $3$, there is another aspect to the intersection of quadrics which deserves to be mentioned.

\subsection{A nonlinear twistor space}
One outcome of the paper \cite{Hitchin1981} was the observation that the {\it conics} on the intersection of two quadrics have normal bundle $\mathcal{O}(1)\oplus \mathcal{O}(1)$ and hence, as in Section~\ref{graviton}, form a natural example of a nonlinear twistor space. I gave the problem to Jacques Hurtubise, my student at the time, and in \cite{Hurtubise} he produced concrete expressions for the conformal structure and Weyl tensor. Here we consider a more geometrical (but less explicit) viewpoint, making contact with the discussion above.

Firstly, a conic in a projective space is the intersection of a plane with a quadric, and the conics in $Q_1\cap Q_2$ are defined by taking an $\alpha$-plane in one of the quadrics of the pencil and intersecting it with a generic one. This means taking $z\in {\rm P}^1$ and an $\alpha$-plane in $Q_z$, but this is simply a point in $M^4$, our ${\rm P}^3$-bundle over $C$. We conclude that $M^4$ is the corresponding complex space-time with a conformal structure. Actually we must remove from $M^4$ the $\alpha$-planes which intersect $Q_1\cap Q_2$ in a pair of lines~-- this is a quartic surface in each ${\rm P}^3$ fibre, the degree $4$ property giving the four lines through a point as noted above.

A point $x\in Q_1\cap Q_2$, according to the nonlinear graviton construction, must define a null surface in~$M^4$. We have already seen how we obtain a ruled surface from $x$ -- the projective bundle of a stable vector bundle $E$. This is in fact the null surface: a point over $z\in {\rm P}^1$ is an $\alpha$-plane in $Q_z$ through~$x$, and hence all the conics pass through $x$ and are null separated. Then the conformal structure on $M^4$ is defined by the stable bundles on~$C$!

\section{Genus 3 curves}
\subsection{The Coble quartic}

Recall that if $C$ has genus $3$ and is non-hyperelliptic it is a quartic curve in ${\rm P}^2$ and then the canonical bundle $K$ is the restriction of $\mathcal{O}(1)$ so that the 3-dimensional space of holomorphic 1-forms can be identified with linear forms on ${\mathbb C}^3$, and likewise the 6-dimensional space of sections of $K^2$ with quadratic forms on ${\mathbb C}^3$.

The moduli space ${\mathcal N}$ of (semi)stable rank two bundles with $\Lambda^2E$ trivial on a quartic curve $C$ is again known explicitly \cite{Narasimhan/Ramanan1987} as a special singular quartic hypersurface in ${\rm P}^7$, initially studied by A.~Coble in the context of abelian varieties. The decomposable bundles $E=L\oplus L^*$ describe a Kummer variety which is the singular locus. This property together with the invariance under ${\mathbb Z}_2^6\cong H^1(C,{\mathbb Z}_2)$ characterize it. Another property, described by Christian Pauly in~\cite{Pauly}, is that, like its cousin the Kummer quartic surface, its dual is another copy.

Pauly expresses the self-duality in the following way in terms of bundles: if $E$ is a very stable rank $2$ bundle with $\Lambda^2E\cong \mathcal{O}$, then there is a unique stable bundle $F$ with $\Lambda^2F\cong K$ such that $\dim H^0(C, E\otimes F)=4$. He now associates to this a net of quadrics in $\mathrm{P}^3$.

\subsection{Families of quadrics}\label{sd}
Let $E$ be a very stable bundle with corresponding bundle $F$ as above and denote by $V$ the 4-dimensional space $H^0(C, E\otimes F)$. The bundle $E$ has a holomorphic skew-symmetric form $\epsilon$ and $F$ a skew form $\epsilon'$ with values in $K$. Hence, as in Section~\ref{four}, we have a quadratic form on~$V$ with values in the 3-dimensional space $H^0(C,K)$. This is Pauly's net of quadrics. But we can go further, following the spinor description of four dimensional geometry.

There is a natural homomorphism
\[ \Lambda^2V\rightarrow H^0\big(C, \Lambda^2(E\otimes F)\big)\rightarrow H^0\big(C, S^2E\otimes K\big)\]
by contracting with $\epsilon'$ and symmetrizing $E$, or taking the self-dual component in the language of Section~\ref{four}. But $S^2E\cong \operatorname{End}_0 E$ and so we have a map from the $6$-dimensional space $\Lambda^2V$ to the $3g-3=6$-dimensional space $H^0(C, \operatorname{End}_0 E\otimes K)$. Leaving aside for the moment the question of whether this is an isomorphism, it offers a way of evaluating $\operatorname{tr} \Phi^2$ on the image.

On a 4-manifold, a metric on the tangent space $T$ induces one on the self-dual $2$-forms, and identifying $S^2_+$ with $ \operatorname{End} _0 S_+$ this can be taken as $-\operatorname{tr} a^2$. The inner product on the self-dual component $\alpha_+$, $\beta_+$ of $\alpha,\beta\in \Lambda^2T^*$ is given by
\[
(\alpha_+,\beta_+)\omega=\frac{1}{2}\left[(\alpha,\beta)\omega+\alpha\wedge\beta\right],
\]
where $\omega$ is the volume form.
In our situation we have an inner product on $V$ with values in sections of $K$, so choose a basis $v_1,\dots, v_4$ and take $v_1\wedge v_2\wedge v_3\wedge v_4$ to trivialize $\Lambda^4V$ and we have $((v_i\wedge v_j)_+,(v_i\wedge v_k)_+)=(v_i\wedge v_j,v_i\wedge v_k)/2$ but
\[
((v_1\wedge v_2)_+,(v_3\wedge v_4)_+)=\frac{1}{2}\left[(v_1\wedge v_2,v_3\wedge v_4)+\sqrt{\det(v_i,v_j)} \right].
\]
This expression looks odd -- we know that it must take values in $H^0\big(C,K^2\big)$, quadratic functions on ${\mathbb C}^3$, but the last term involves the square root of a quartic.

The only resolution is that $\det(v_i,v_j)= p^2$ {\it modulo the quartic equation of $C$}. We therefore have two quartic curves in~$\mathrm{P}^2$, the original one with equation $\det(v_i,v_j)- p^2=0$ and another one $X$, depending on $E$ with equation $\det(v_i,v_j)=0$. The fact that the difference of the quartic expressions is~$p^2$, a square, means that $X$ and $C$ meet tangentially at $8$ points instead of~$16$. Pauly shows in fact that the space of tangential quartics to~$C$ essentially parametrizes the projective bundles arising from the Coble quartic.

\begin{Remark}
The eight points of intersection determine a pencil of quartics $\det(v_i,v_j)- zp^2=0$, $z\in {\rm P}^1$ and do not pick out $X$ to define the bundle $E$ on $C$. In fact~\cite{Pauly}, $E$ has 8 maximal subbundles $L^*_i$ of degree $-1$ and 28 sections $s_{ij}$ of $L_iL_j$, $i\ne j$, which vanish at the points when the pair coincide. If the divisor of $s_{ij}$ is $x+y$ then the line in ${\rm P}^2$ joining $x$ and $y$ in~$C$ intersects~$X$ as a bitangent. This fixes~$X$ in the pencil.
\end{Remark}

\begin{Remark}
 Choosing a square root $K^{1/2}$ of the line bundle $K$, the bundle $E'=F\otimes K^{-1/2}$ has $\Lambda^2E'$ trivial since $\Lambda^2F\cong K$. The projective bundle ${\rm P}(F)$ is independent of the choice of square root and so the self-duality can be regarded as a transformation on the moduli space of projective bundles. In the spinor description above $E$, $F$ play the role of self-dual or anti-self-dual spin spaces and the inner product changes sign on the $\sqrt{\det(v_i,v_j)}$ factor. Hence we have the involution
$z\mapsto -z$ on the pencil.
\end{Remark}

\subsection{The discriminant}
If $q_i$, $1\le i\le m$ are symmetric $n\times n$ matrices and $Q=z_1q_1+\dots + z_mq_m$ then the {\it discriminant} is the hypersurface in ${\rm P}^{m-1}$ defined by $\det Q=0$. In our context of a quadratic map $H^0(C,\operatorname{End}_0E\otimes K)\rightarrow H^0\big(C,K^2\big)$ it is the space of linear functions on $H^0\big(C,K^2\big)$ which have a non-zero critical point. By Serre duality, the discriminant lies naturally in ${\rm P}\big(H^1(C,K^*)\big)$.

In the case of a net of conics, with $m=n=3$, the discriminant is a plane cubic curve, but as we saw in Section~\ref{net} the case for $\operatorname{tr} \Phi^2$ in genus~$2$ gave three lines, a non-generic situation. However, before considering a particular case of the genus~$3$ discriminant
 we return to an issue from the last section:

\begin{Proposition} The map $\Lambda^2V\rightarrow H^0\big(C, S^2E\otimes K\big)$ is an isomorphism.
\end{Proposition}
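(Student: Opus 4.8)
The plan is to use that both sides have dimension $3g-3=6$ (via $S^2E\cong\operatorname{End}_0E$), so it is enough to prove injectivity. First I would make the map explicit. Since $\Lambda^2E\cong\mathcal{O}$ gives $E\cong E^*$, the space $V=H^0(C,E\otimes F)$ is the space of homomorphisms $\phi\colon E\to F$, and the two skew forms $\epsilon$, $\epsilon'$ produce for each $\psi\colon E\to F$ an adjoint $\tilde\psi\colon F\to E\otimes K$. Under the splitting $\Lambda^2(E\otimes F)=S^2E\otimes K\oplus\Lambda^2E\otimes S^2F$ into self-dual and anti-self-dual parts as in Section~\ref{four}, the map of the Proposition sends a decomposable bivector $\phi\wedge\psi$ to the trace-free Higgs field $\tilde\psi\phi-\tilde\phi\psi\in H^0(C,\operatorname{End}_0E\otimes K)$. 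Call this map $\mu_+$, let $\mu_-\colon\Lambda^2V\to H^0(C,S^2F)$ be the anti-self-dual component, and set $\mu=(\mu_+,\mu_-)$ into $H^0(C,\Lambda^2(E\otimes F))$; write $\Phi=\mu_+(\omega)$ and $\Psi=\mu_-(\omega)$.

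Next I would note that the evaluation $V\otimes\mathcal{O}\to E\otimes F$ is an isomorphism at a general point, its determinant being the section $p$ of $K^2$ of Section~\ref{sd} (with $\det(v_i,v_j)=p^2$), which is not identically zero. Hence $\Lambda^2$ of the evaluation is a fibrewise isomorphism at a general point, so $\mu$ is injective, and an element of $\ker\mu_+$ is exactly an $\omega$ with $\Phi=0$, so that $\mu(\omega)=\Psi$ is a section of $\Lambda^2E\otimes S^2F\cong\operatorname{End}_0F\otimes K$, a Higgs field for $F$.

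The decisive local observation is that, since $\Lambda^2$ of the evaluation is a fibrewise isomorphism generically, $\omega$ is decomposable precisely when its image is, and a decomposable anti-self-dual bivector corresponds to a nilpotent element of $\operatorname{End}_0F$. Thus if a kernel element $\omega$ is decomposable then $\Psi$ is nilpotent at every point, hence vanishes by very stability, forcing $\omega=0$ since $\mu$ is injective. Because the Pl\"ucker quadric of decomposable bivectors is a hypersurface in $\mathrm{P}(\Lambda^2V)=\mathrm{P}^5$ it meets every projective line, so a kernel of dimension $\ge 2$ would contain a decomposable element; therefore $\dim\ker\mu_+\le 1$.

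The main obstacle is to rule out a one-dimensional kernel spanned by a rank-four bivector $\omega_0$, for which $\Psi$ is no longer nilpotent and very stability does not apply directly. Here I would use the wedge pairing $\Lambda^2(E\otimes F)\otimes\Lambda^2(E\otimes F)\to\Lambda^4(E\otimes F)\cong K^2$, which is $+\operatorname{tr}$ on the self-dual summand, $-\operatorname{tr}$ on the anti-self-dual summand, and zero across them; pulling it back along $\mu$ and using $\mu(\omega)\wedge\mu(\omega)=\Lambda^4(\mathrm{ev})(\omega\wedge\omega)$ gives
\[
\operatorname{tr}\Phi^2-\operatorname{tr}\Psi^2=c\,(\omega\wedge\omega)\,p,\qquad \omega\wedge\omega\in\Lambda^4V\cong\mathbb{C},\ c\ne0,
\]
which is exactly the sign change of the $\sqrt{\det(v_i,v_j)}=p$ factor of Section~\ref{sd} and shows that on $\ker\mu_+$ one has $\operatorname{tr}\Psi^2=-c\,(\omega_0\wedge\omega_0)\,p$, a nonzero multiple of $p$. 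I expect the delicate point to be ruling this out: one must show that no rank-four $\omega_0$ yields an $F$-Higgs field with characteristic polynomial so constrained. I would attempt it through the $E\leftrightarrow F$ self-duality of Section~\ref{sd}, under which $\mu_+$ for $(E,F)$ becomes $\mu_-$ for the partner pair and very stability can be brought to bear symmetrically on $F$; verifying that this symmetry genuinely excludes the rank-four kernel is the step I expect to require the most care.
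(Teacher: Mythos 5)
Your reduction to injectivity, your identification of the two components $\mu_\pm$ of $\Lambda^2(\mathrm{ev})$, and your identity $\operatorname{tr}\Phi^2-\operatorname{tr}\Psi^2=c\,(\omega\wedge\omega)\,p$ are all correct, and they capture the same underlying structure the paper uses (your identity is a rearrangement of the paper's description of the quadratic form on $\Lambda^2V$ as $Q_2+pQ_0$). But the argument has two genuine gaps. First, in the decomposable case you kill the nilpotent $\Psi$ ``by very stability'': $\Psi$ is a Higgs field for $F$, whereas the hypothesis is that $E$ is very stable. Very stability of $F$ is neither assumed nor established anywhere --- Pauly's result, as quoted in the paper, gives only that $F$ is \emph{stable} --- and your appeal to self-duality to transfer the property is an unproven claim, not a step. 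Second, and decisively, the rank-four case is not ruled out at all: you show that a one-dimensional kernel spanned by a non-decomposable $\omega_0$ would force $\operatorname{tr}\Psi^2$ to be a non-zero multiple of $p$, note that very stability does not apply, and then defer the exclusion to a hoped-for symmetry argument that you explicitly say you have not verified. Nothing you prove contradicts the existence of such an $\omega_0$, so the proof terminates exactly where the real difficulty sits.

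For contrast, the paper needs no case analysis on the rank of a kernel element. If $\omega\in\ker\mu_+$, then, because the $H^0\big(C,K^2\big)$-valued form on $\Lambda^2V$ is the pullback of $\operatorname{tr}\Phi^2$, $\omega$ lies in the radical of that form, so the $6\times 6$ determinant $\det(Q_2+pQ_0)$ vanishes identically as a polynomial on ${\mathbb C}^3$. The computed identity $\det(Q_2+pQ_0)=\big(\det Q-p^2\big)^3$ --- the cube of the equation of $C$, hence not the zero polynomial --- gives the contradiction in one stroke, for decomposable and rank-four $\omega$ alike, and uses nothing about $F$ beyond its existence. That determinant identity (or any substitute, e.g., exhibiting a single point of ${\rm P}^2$ at which the matrix $Q_2+pQ_0$ is nondegenerate) is precisely the ingredient your approach is missing and would need in order to become a proof.
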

\begin{proof} The sections of $K$ given by $(v_i,v_j)$ and $p$ define the quadratic form on $\Lambda^2V$ relative to a basis. Suppose $\alpha\in \Lambda^2V$ maps to zero
in $H^0\big(C, S^2E\otimes K\big)$, then since the form is $\operatorname{tr} \Phi^2$ on the image, $\alpha$ is orthogonal to all vectors and so the discriminant vanishes identically.

We described the form as $Q_2+pQ_0$ where $Q_0$ is the exterior product $Q_0(\alpha,\beta)=\alpha\wedge\beta$ and~$Q_2$ is the inner product on~$\Lambda^2V$ induced by~$Q$ on~$V$. Then a calculation gives
\begin{equation*}
\det (Q_2+pQ_0)=\big(\det Q-p^2\big)^3.
\end{equation*}
We should be careful in interpreting this formula. The discriminant hypersurface lies in $\mathrm{P}^5={\rm P}\big(H^1(C,K^{*})\big)$ and is defined by a polynomial in $H^0\big(C,K^2\big)$, without imposing any relations like $\big(x^2\big)\big(y^2\big)=(xy)^2$. These relations define a Veronese surface, an embedding of ${\rm P}^2$ in ${\rm P}^5$ and $\det Q-p^2$ is the equation of $C\subset {\rm P}^2$, so this represents the intersection of the discriminant with~${\rm P}^2$. However, since it is not identically zero, the map in the proposition is injective and since both spaces have dimension~$6$, it is an isomorphism.
\end{proof}

 \subsection{Syzygetic tetrads}
 Consider finally the special case where the quartic $X$ defining the vector bundle $E$ consists of four lines. Then $C$ has equation $\ell_1\ell_2\ell_3\ell_4=p^2$ for linear forms $\ell_i$. Since $X$ meets $C$ tangentially, these lines are four bitangents, a so-called {\it syzygetic tetrad}~-- the 8 points of intersection lie on a conic $p=0$.
 It is known classically that any quartic can be so expressed and in 315 different ways~\cite{Dolgachev}. This only accounts for a finite number of bundles~$E$, but it is a case which we can work out explicitly and see that, unlike the case of genus~2, there are continuous parameters in the equivalence class of the family of quadrics.

 One should note that bitangents are related to spinors on~$C$~-- in two dimensions the spin bundle is a square root $K^{1/2}$ of the canonical bundle and a holomorphic section is a solution to the Dirac equation. There are $\big\vert H^1(C,{\mathbb Z}_2)\big\vert=2^{2g}=2^6=64$ of these and $28$ have a~section~$s$. Then $s^2$ is a section of~$K$ with two double zeros~-- the bitangent. The notion of bitangent is thus intrinsic for~$C$, as is the syzygetic tetrad, for this is given by four square roots whose tensor product is~$K^2$.

 \begin{Proposition}
Let $E$ be the rank $2$ bundle on a quartic curve $C$ defined by a syzygetic tetrad. Then the family of quadrics is spanned by equations of the form
\[
x_i^2+2a_i(x_1y_1+x_2y_2+x_3y_3), \qquad y_i^2+2b_i(x_1y_1+x_2y_2+x_3y_3).
\]
for $i=1,2,3$. The six coefficients $a_i$, $b_i$ define the equation of $C$.
\end{Proposition}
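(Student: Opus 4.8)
The plan is to compute the family of quadrics directly from the self-dual inner product of Section~\ref{sd}, reducing everything to the Gram matrix $M=\big((v_i,v_j)\big)$ of the $K$-valued form on $V$ and the conic $p=\sqrt{\det M}$. On $\Lambda^2V$ the relevant quadratic form, valued in $H^0\big(C,K^2\big)$, is $Q_2+pQ_0$, where $Q_2$ is the form induced by $M$ and $Q_0(\alpha,\beta)=\alpha\wedge\beta$ is the natural wedge pairing into $\Lambda^4V$. Writing the six Pl\"ucker coordinates $p_{ij}$ and pairing complementary indices, I set $x_1=p_{12}$, $y_1=p_{34}$, $x_2=p_{13}$, $y_2=p_{24}$, $x_3=p_{14}$, $y_3=p_{23}$; after adjusting signs of the coordinates $Q_0$ becomes the split form $\kappa=x_1y_1+x_2y_2+x_3y_3$, which already accounts for the mixed term in the statement. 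So the whole problem is to pin down $M$ and then expand $p$.

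First I would translate the syzygetic data into the language of theta characteristics. The four bitangents come from four odd theta characteristics $\theta_i$ with $\theta_i^2\cong K$ and, by syzygeticity, $\theta_1\theta_2\theta_3\theta_4\cong K^2$; if $s_i\in H^0(C,\theta_i)$ then $\ell_i=s_i^2\in H^0(C,K)$ is the bitangent line and $p=s_1s_2s_3s_4\in H^0\big(C,K^2\big)$ is precisely the conic through the eight contact points, so that $C$ has equation $\ell_1\ell_2\ell_3\ell_4-p^2=0$ and $X=\{\det M=0\}$ degenerates to the four lines $\ell_1\ell_2\ell_3\ell_4=0$. The \emph{key step} is to choose the frame $v_1,\dots,v_4$ of $V=H^0(C,E\otimes F)$ adapted to this tetrad so that the Gram matrix is diagonal, $M=\operatorname{diag}(\ell_1,\ell_2,\ell_3,\ell_4)$; its determinant is manifestly $\ell_1\ell_2\ell_3\ell_4$, matching the degeneration of $X$.

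Granting this normal form the computation is immediate. Since $M$ is diagonal, every off-diagonal entry of $Q_2$ in the Pl\"ucker basis vanishes, because $Q_2(v_i\wedge v_j,v_k\wedge v_l)=(v_i,v_k)(v_j,v_l)-(v_i,v_l)(v_j,v_k)$ is zero unless $\{i,j\}=\{k,l\}$, and then it equals $\ell_i\ell_j$. Hence $Q_2+p\kappa$ carries coefficients $\ell_1\ell_2,\ell_1\ell_3,\ell_1\ell_4$ on $x_1^2,x_2^2,x_3^2$, coefficients $\ell_3\ell_4,\ell_2\ell_4,\ell_2\ell_3$ on $y_1^2,y_2^2,y_3^2$, and $p$ on $\kappa$. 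The family of quadrics is the set of forms $\xi\circ(Q_2+p\kappa)$ on $\Lambda^2V$ as $\xi$ runs over $H^0\big(C,K^2\big)^*$, so it is the $6$-dimensional space cut out in the coefficient vector $(c_1,c_2,c_3,d_1,d_2,d_3,e)$ by the single linear relation among these seven sections. Since the six products $\ell_i\ell_j$ form a basis of $H^0\big(C,K^2\big)$ for lines in general position, writing $p=2a_1\ell_1\ell_2+2a_2\ell_1\ell_3+2a_3\ell_1\ell_4+2b_1\ell_3\ell_4+2b_2\ell_2\ell_4+2b_3\ell_2\ell_3$ determines the $a_i,b_i$ uniquely, and a basis of the image is exactly $x_i^2+2a_i\kappa$ and $y_i^2+2b_i\kappa$. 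Because this expansion recovers $p$ while the lines are fixed, the $a_i,b_i$ recover $p^2$ and hence the quartic equation $\ell_1\ell_2\ell_3\ell_4-p^2=0$ of $C$.

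The hard part is the normal form $M=\operatorname{diag}(\ell_1,\dots,\ell_4)$: everything downstream is linear algebra, but the diagonality encodes a genuine geometric fact about the bundle. One must identify $V=H^0(C,E\otimes F)$ and its frame with the section data $s_i$ of the four theta characteristics — equivalently, show that the symmetric determinantal representation of the degenerate quartic $X$ furnished by $E$ is the diagonal one rather than some other theta characteristic supported on the four lines. I expect this to rest on Pauly's description of the very stable bundle through its maximal subbundles and tangential quartics (the Remarks of Section~\ref{sd}), which singles out the syzygetic bundle among all determinantal representations; verifying that this distinguished representation is the diagonal matrix is where the real work lies, and it is also what guarantees that, unlike the rigid genus~$2$ case, the continuous parameters $a_i,b_i$ genuinely survive in the equivalence class of the family.
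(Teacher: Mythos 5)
Your argument is essentially the paper's own proof: the paper likewise takes the $K$-valued form on $V$ to be diagonal in the four lines (normalized to $x$, $y$, $z$, $x+y+z$), computes the resulting $6\times 6$ matrix $Q_2+pQ_0$ in split/Pl\"ucker coordinates where $Q_2$ pairs only complementary indices with coefficients $\ell_i\ell_j$, and obtains the $a_i$, $b_i$ by expanding $p$ in the basis of pairwise products of the lines --- exactly your $p=\sum_i a_iu_i+b_iv_i$ step. The only discrepancy is your closing caveat: the diagonal Gram matrix that you flag as ``the real work'' is treated in the paper as the very meaning of ``the bundle defined by a syzygetic tetrad'' (the tangential quartic $X=\{\det(v_i,v_j)=0\}$ is taken to be the four lines in the obvious diagonal determinantal representation, after a projective normalization), so the paper supplies no further justification for that step than you do.
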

\begin{proof}
We normalize the equations of the four lines and then the quadratic form $Q$ on $V=H^0(C,E\otimes F)$ has diagonal entries $x$, $y$, $z$, $x+y+z$.
 Write a general quadratic polynomial in $(x,y,z)$ as
 \[ v_1yz+v_2zx+v_3xy+(u_1x+u_2y+u_3z)(x+y+z).\]
Then, using the inner products of self-dual components as in Section~\ref{sd}, the $6\times 6$ symmetric matrix of linear forms, with $p=\sum_{i=1}^3 a_iu_i+b_iv_i$, can be written
 \[ \begin{pmatrix} u_1 & 0 & 0 & p & 0 & 0\\
 0 & u_2 & 0 & 0 & p & 0\\
 0 & 0 & u_3 & 0 & 0 & p\\
 p & 0 & 0 & v_1 & 0 & 0 \\
 0 & p & 0 & 0 & v_2 & 0\\
 0 & 0 & p & 0 & 0 & v_3\end{pmatrix}
 \]
so the family of quadrics is spanned, for $i=1,2,3$, by
\begin{equation}
x_i^2+2a_i(x_1y_1+x_2y_2+x_3y_3), \qquad y_i^2+2b_i(x_1y_1+x_2y_2+x_3y_3).
\label{algebra}
\end{equation}
\begin{Remark} One may regard these expressions as the relations in a commutative algebra. In the more general context of Higgs bundles such algebras include the cohomology of Grassmannians and products thereof~\cite{Hausel/Hitchin}. So for genus two the relations $x^2=y^2=z^2=0$ generate the cohomology of ${\rm P}^1\times {\rm P}^1\times {\rm P}^1$ and the above algebra can be viewed as a deformation of $H^*\big(\big({\rm P}^1\big)^6,{\mathbb C}\big)$.
\end{Remark}
 It is clear from the matrix that the discriminant is the degree $6$ hypersurface which consists of the union of three singular quadrics $u_iv_i=p^2$, $i=1,2,3$ and we are concerned with equivalence up to projective transformations -- can the relations in (\ref{algebra}) be reduced to standard form as in the genus $2$ situation? Although what is at issue now are three separate quadrics $q_i$ rather than the net they generate we can still consider the discriminant
\[ \det\left(\sum_{i=1}^3 z_i q_i\right)=z_1z_2z_3(z_1z_2z_3-2(a_1b_1z_2z_3+a_2b_2z_3z_1+a_3b_3 z_1z_2)(z_1+z_2+z_3))\]
 and we observe three invariants $a_ib_i$, which are essential parameters for the quadratic map. The six coefficients $a_i$, $b_i$ correspond to the $3g-3=6$ moduli of the curve $C$, so for this case different quartic curves can define isomorphic algebras with relations of the form~(\ref{algebra}).
 \end{proof}

 The decomposition of the discriminant above into three components reflects the fact that in this case the bundle~$\operatorname{End}_0E$ is a direct sum $U_1\oplus U_2\oplus U_3$ where $U_i^2$ and $U_1U_2 U_3$ are trivial. This relates to the syzygetic tetrad as follows: each element of the tetrad gives a square root of $K$: $K^{1/2}$, $K^{1/2}U_1$, $K^{1/2}U_2$, $K^{1/2}U_3$ and since the product of these is~$K^2$ we have $U_1U_2U_3$ trivial.

 Writing the last relation additively (considering the divisor class of the $U_i$) we obtain $U_1+U_2+U_3=0$ which gives a 2-dimensional vector subspace of $H^1(C,{\mathbb Z}_2)$. It is in fact isotropic with respect to the intersection form. This is where
 the number 315 arises~-- counting isotropic planes gives $\big(2^6-1\big)\big(2^5-2\big)/\vert {\rm GL}(2,{\mathbb Z}_2)\vert=315$.

\subsection*{Acknowledgements}
The author thanks Tam\'as Hausel for helpful comments.

\pdfbookmark[1]{References}{ref}
\LastPageEnding

\end{document}